\newcommand{\refsec}[1] {Section~\ref{#1}}
\newcommand{\refeq}[1] {(\ref{#1})}
\newcommand{\reffig}[1] {Figure~\ref{#1}}
\newcommand{\refapx}[1] {\ref{#1}}
\renewcommand{\vec}[1] {\boldsymbol{#1}}
\newcommand{\mat}[1] {\boldsymbol{\mathsf{#1}}}
\newcommand{\keywords}[1]{\par\addvspace\baselineskip
\noindent\keywordname\enspace\ignorespaces#1}
\newcommand{\R}{\mathbb{R}}
\newcommand{\E}{\mathbb{E}}
\newcommand{\be}{\vec{e}}
\newcommand{\br}{\vec{r}}
\newcommand{\bd}{\vec{d}}
\newcommand{\bk}{\vec{\kappa}}
\newcommand{\bm}{\vec{m}}
\newcommand{\bn}{\vec{n}}
\newcommand{\bw}{\vec{\omega}}
\newcommand{\bv}{\vec{v}}
\newcommand{\bnu}{\vec{\nu}}
\newcommand{\bJ}{\vec{J}}
\newcommand{\bl}{\vec{L}}
\newcommand{\bh}{\vec{h}}
\newcommand{\bF}{\vec{F}}
\newcommand{\bp}{\vec{p}}
\newcommand{\bq}{\vec{q}}
\begin{document}

\mainmatter

\title{Symbolic-Numeric Integration of the\\Dynamical Cosserat Equations}
\titlerunning{Symbolic-Numeric Integration of the Dynamical Cosserat Equations}

\author{Dmitry~A.~Lyakhov\inst{1}
\and Vladimir~P.~Gerdt\inst{3,4} \and Andreas~G.~Weber\inst{5} \and Dominik~L.~Michels\inst{1,2} }
\institute{
Visual Computing Center, King Abdullah University of Science and Technology, Al Khawarizmi Building, Thuwal, 23955-6900, Kingdom of Saudi Arabia\\
\email{\{dmitry.lyakhov,dominik.michels\}@kaust.edu.sa}\\
\and
Department of Computer Science, Stanford University, 353 Serra Mall, Stanford, CA 94305, United States of America\\
\email{michels@cs.stanford.edu}\\
\and
Laboratory of Information Technologies, Joint Institute for Nuclear Research, 6 Joliot--Curie St, Dubna, 141980, Russian Federation\\
\and
Peoples' Friendship University of Russia, 6 Miklukho--Maklaya St, Moscow, 117198, Russian Federation\\
\email{gerdt@jinr.ru}\\
\and
Institute of Computer Science II, University of Bonn, Friedrich-Ebert-Allee 144, 53113 Bonn, Germany\\
\email{weber@cs.uni-bonn.de}}

\authorrunning{D.~A.~Lyakhov et al.}
\toctitle{Lecture Notes in Computer Science}
\tocauthor{D.~A.~Lyakhov et al.}


\maketitle

\begin{abstract}
We devise a symbolic-numeric approach to the integration of the dynamical part of the Cosserat equations, a system of nonlinear partial differential equations describing the mechanical behavior of slender structures, like fibers and rods. This is based on our previous results on the construction of a closed form general solution to the kinematic part of the Cosserat system. Our approach combines methods of numerical exponential integration and symbolic integration of the intermediate system of nonlinear ordinary differential equations describing the dynamics of one of the arbitrary vector-functions in the general solution of the kinematic part in terms of the module of the twist vector-function. We present an experimental comparison with the well-established generalized $\alpha$-method illustrating the computational efficiency of our approach for problems in structural mechanics.
\keywords{Analytical Solution, Cosserat Rods, Dynamic Equations, Exponential Integration, Generalized $\alpha$-method, Kinematic Equations, Symbolic Computation.}
\end{abstract}

\section{Introduction}
\label{sec:1}

Deformable-body dynamics can be considered as a subarea of continuous mechanics that studies motion of deformable solids subject to the action of internal and external forces (cf.~\cite{Luo:10}). The equations describing the dynamics of such solids are nonlinear partial differential equations (PDEs) whose independent variables are three spatial coordinates and time. Given a particular deformable mechanical structure, to describe its dynamics, it is necessary to satisfy these equations at each point of the structure together with appropriate boundary conditions. For a mechanical structure having special geometric properties, it is worthwhile to exploit these properties to develop a simplified but geometrically exact mechanical model of the structure. Classical examples of such models are Cosserat theories of shells and rods; see~e.g.~\cite{Rubin:00} and references therein.

Rods are nearly one-dimensional structures whose dynamics can be described by the Cosserat theory of (elastic) rods (cf.~\cite{Antman:1995}, Ch.~8;~\cite{Rubin:00}, Ch.~5; and the original work \cite{Cosserat:1909}). This is a general and geometrically exact dynamical model that takes bending, extension, shear, and torsion into account, as well as rod deformations under external forces and torques. In this context, the dynamics of a rod is described by a governing system of twelve first-order nonlinear partial differential equations (PDEs) with a pair of independent variables $(s,t)$, where $s$ is the arc-length and $t$ the time parameter. In this PDE system, the two kinematic vector equations ((9a)--(9b) in~\cite{Antman:1995}, Ch.~8) are parameter free and represent the compatibility conditions for four vector functions $\bk,\bw,\bnu,\bv$ in $(s,t)$. Whereas the first vector equation only contains two vector functions $\bk$ and $\bw$, the second one contains all four vector functions $\bk,\bw,\bnu,\bv$. The remaining two vector equations in the governing system are dynamical equations of motion and include two more dependent vector variables $\hat{\bm}(s,t)$ and $\hat{\bn}(s,t)$. Moreover, these dynamical equations contain parameters (or parametric functions of $s$) to characterize the rod and to include the external forces and torques. Studying the dynamics of Cosserat rods has various scientific and industrial applications, for example in civil and mechanical engineering (cf.~\cite{Boyer:2011}), microelectronics and robotics (cf.~\cite{CaoTucker:2008}), biophysics (cf.~\cite{Hilfinger:2006} and references therein), and visual computing (cf.~\cite{MMS:2015}).

Because of its inherent stiffness caused by different deformation modes, the treatment of the underlying equations usually requires the application of specific solvers; see e.g.~\cite{MSW:2014}. In order to reduce the computational overhead caused by the stiffness, we employed Lie symmetry based integration methods (cf.~\cite{Ibragimov:09,Olver:93}) and the theory of completion to involution (cf.~\cite{Seiler:10}) to the two kinematic vector equations (cf.~\cite{Antman:1995}, Ch.~8, Eq.~(9a)--(9b)) and constructed their general and analytical solution in \cite{MLGSW:2014,MLGHRKW:2016}, which depends on two arbitrary vector functions in $(s,t)$.

In this contribution, we exploit the general analytic solution to the kinematic part of the governing Cosserat system constructed in \cite{MLGSW:2014,MLGHRKW:2016} and develop a symbolic-numeric  approach to the integration of the dynamical part of the system. Our approach combines the ideas of numerical exponential integration (see e.g.~\cite{Hochbruck:2010,MD:2015} and references therein) and symbolic integration of the intermediate system of nonlinear ordinary differential equations describing the dynamics of the arbitrary vector function in the general solution to the kinematic part in terms of the module of the twist vector function. The symbolic part of the integration is performed by means of Maple. We present an experimental comparison of the computational efficiency of our approach with that of the generalized $\alpha$-method for the numerical integration of problems in structural mechanics (see e.g.~\cite{SobottkaLayWeber:2008}).

\noindent
This paper is organized as follows. In \refsec{sec:2}, we present the governing PDE system in the special Cosserat theory of rods and analytical solution to its kinematic part constructed  in~\cite{MLGSW:2014,MLGHRKW:2016}. In \refsec{sec:3}, we show first that the (naive) straightforward numerical integration of the dynamical part of Cosserat system has a severe obstacle caused by a singularity in the system. Then we describe a symbolic-numeric method to integrate the dynamical equations based on the ideas of exponential integration and the construction of a closed form analytical solution to the underlying nonlinear dynamical system. In doing so, we show that this symbolic-numeric method is free of the singularity problem. In \refsec{sec:4}, we present an experimental comparison of our method with the generalized $\alpha$-method. Some concluding remarks are given in \refsec{sec:5}.

\section{Governing Cosserat Equations and the General Solution of their Kinematic Part}
\label{sec:2}

The governing PDE system in the special Cosserat theory of rods (cf.~\cite{Antman:1995,CaoTucker:2008,Cosserat:1909,MLGSW:2014,MLGHRKW:2016}) can be written in the following form:
\begin{subequations}
 \begin{align}
    & \bk_t=\bw_s - \bw\times \bk\,,\label{kinematic1}\\[0.1cm]
    & \bnu_t=\bv_s+\bk\times\bv-\bw\times \bnu\,,\label{kinematic2}\\[0.15cm]
    & \rho\bJ\cdot \bw_t=\hat{\bm}_s+\bk\times\hat{\bm}+\bnu\times
        \hat{\bn}-\bw\times(\rho\bJ\cdot\bw)+\bl\,,\label{dynamic1}\\[0.15cm]
    & \rho A\bv_t=\hat{\bn}_s+\bk\times \hat{\bn}-\bw\times (\rho A\bv)+\bF\,.\label{dynamic2}
 \end{align}
\end{subequations}
Here, the independent variable $t$ denotes the time and another independent variable $s$ the arc-length parameter identifying a {\em material cross-section} of the rod, which consists of all material points whose reference positions are on the plane perpendicular to the rod at $s$. The
{\em Darboux} vector-function $\bk=\sum_{k=1}^3\kappa_k\bd_k$ and the {\em twist} vector-function $\bw=\sum_{k=1}^3\omega_k\bd_k$ are determined by the kinematic relations
\begin{equation*}
         \partial_s\bd_k=\bk\times \bd_k\,,\quad \partial_t\bd_k=\bw\times \bd_k\,,\label{kr}
\end{equation*}
where the vectors $\bd_1,\,\bd_2$, and $\bd_3:=\bd_1\times \bd_2$ form a right-handed orthonormal moving frame. These vectors are called {\em directors}. The use of the triple $(\bd_1,\,\bd_2,\,\bd_3)$ is natural for the intrinsic description of the rod deformation. Moreover, $\br$ describes the motion of the rod relative to the fixed frame $(\be_1,\,\be_2,\,\be_3)$. This is illustrated in \reffig{Fig1}.

In doing so, the motion of a rod is defined by the mapping
\[
   [a,b]\times \R \ni (s,t) \mapsto \left(\br(s,t),\,\bd_1(s,t),\,\bd_2(s,t),\bd_3(s,t)\right)\in \E^3\,. \label{rd1d2}
\]

Furthermore, the governing system~\eqref{kinematic1}--\eqref{dynamic2} includes additional vector-valued dependent variables: {\em linear strain} $\bnu$ of the rod and the {\em velocity} $\bv$ of  the material cross-section:
\[
 \bnu:=\partial_s\br=\sum_{k=1}^3\nu_k\bd_k\,,\quad \bv:=\partial_t\br=\sum_{k=1}^3v_k\bd_k\,.
\]

\contourlength{1.6pt}
\newcommand{\renderScale}{0.12}
\newcommand{\outlineScale}{1.25*\renderScale}
\newcommand{\pxToMetric}{0.0352777778}
\newcommand{\horizontalCrop}{0.3}
\newcommand{\imageWidth}{1920}
\newcommand{\imageHeight}{1080}

\begin{figure}
\centering
\begin{tikzpicture}
\begin{scope}[scale=10*\pxToMetric*\renderScale]
\coordinate (d1) at (609.500mm, 393.654mm);
\coordinate (d2) at (451.588mm, 30.955mm);
\coordinate (d3) at (839.662mm, 92.118mm);
\coordinate (o1) at (631.054mm, 174.816mm);
\coordinate (ax) at (887.632mm, 543.492mm);
\coordinate (ay) at (1369.466mm, 541.844mm);
\coordinate (az) at (1126.066mm, 941.984mm);
\coordinate (o2) at (1118.542mm, 639.549mm);
\coordinate (r) at (1495.859mm, 221.154mm);
\coordinate (s0) at (608.083mm, 533.215mm);
\coordinate (sl) at (1832.694mm, 552.899mm);
\end{scope}

\tikzset{
  double -latex/.style args={#1 colored by #2 and #3}{
    -latex,line width=#1,#2,
    postaction={draw,-latex,#3,line width=(#1)/3,shorten <=(#1)/4,shorten >=4.5*(#1)/3},
  },
  double round cap-latex/.style args={#1 colored by #2 and #3}{
    round cap-latex,line width=#1,#2,
    postaction={draw,round cap-latex,#3,line width=(#1)/3,shorten <=(#1)/4,shorten >=4.5*(#1)/3},
  },
  double round cap-stealth/.style args={#1 colored by #2 and #3}{
    round cap-stealth,line width=#1,#2,
    postaction={round cap-stealth,draw,,#3,line width=(#1)/3,shorten <=(#1)/3,shorten >=2*(#1)/3},
  },
  double -stealth/.style args={#1 colored by #2 and #3}{
    -stealth,line width=#1,#2,
    postaction={-stealth,draw,,#3,line width=(#1)/3,shorten <=(#1)/3,shorten >=2*(#1)/3},
  },
  double -triangle 45/.style args={#1 colored by #2 and #3}{
    -triangle 45,line width=#1,#2,
    postaction={-triangle 45,draw,,#3,line width=(#1)/3,shorten <=(#1)/3,shorten >=3.5*(#1)/2},
  },
}

\path[use as bounding box] (\horizontalCrop, 0) rectangle (\renderScale*\pxToMetric*\imageWidth - \horizontalCrop, \renderScale*\pxToMetric*\imageHeight);

\node[scale=\outlineScale, anchor=south west, inner sep=0pt] at (0, 0) {\includegraphics{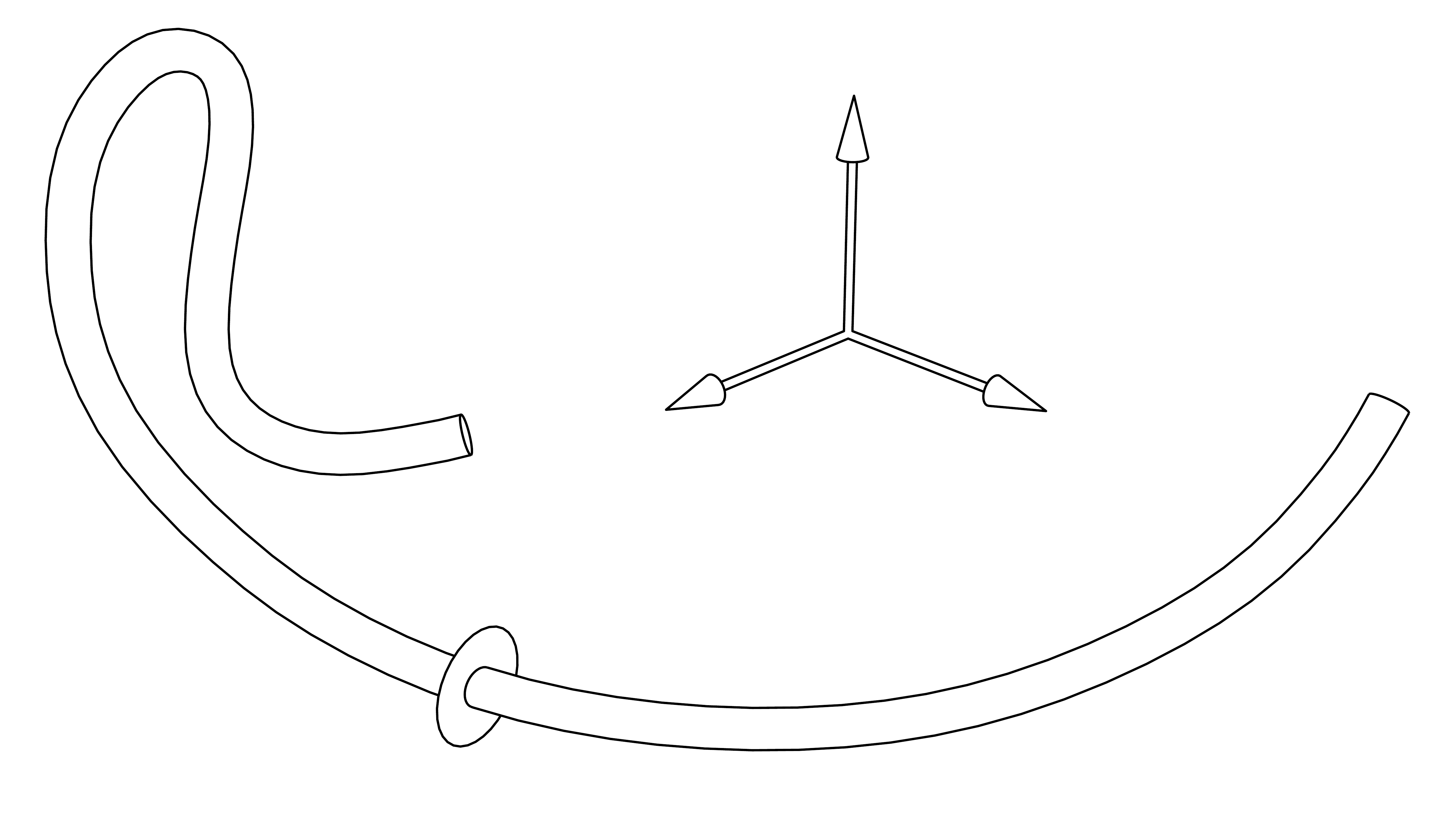}};

\draw[double -latex=2pt colored by black and white] (o2) -- (r) node[black, pos=0.65, right, inner sep=5pt, xshift=0pt, yshift=0pt] {\contour{white}{$\boldsymbol{r}(s, t)$}};

\draw[white, thick, fill=black] (o2) circle (0.1);

\draw[double -latex=2pt colored by black and white] (o1) -- (d1) node[black, anchor=west, inner sep=2pt, xshift=0pt, yshift=0pt] {\contour{white}{$\boldsymbol{d_1}$}};
\draw[double -latex=2pt colored by black and white] (o1) -- (d2) node[black, anchor=south east, inner sep=0pt, xshift=0pt, yshift=0pt] {\contour{white}{$\boldsymbol{d_2}$}};
\draw[double -latex=2pt colored by black and white] (o1) -- (d3) node[black, anchor=west, inner sep=0pt, xshift=0pt, yshift=0pt] {\contour{white}{$\boldsymbol{d_3}$}};

\node [black, anchor=north east, inner sep=2pt, xshift=0pt, yshift=0pt] at (ax) {\contour{white}{$\boldsymbol{e}_1$}};
\node [black, anchor=north west, inner sep=2pt, xshift=0pt, yshift=0pt] at (ay) {\contour{white}{$\boldsymbol{e}_2$}};
\node [black, anchor=south, inner sep=4pt, xshift=0pt, yshift=0pt] at (az) {\contour{white}{$\boldsymbol{e}_3$}};

\node [black, anchor=south, inner sep=3pt, xshift=0pt, yshift=0pt] at (s0) {\contour{white}{$s = a$}};
\node [black, anchor=south, inner sep=3pt, xshift=0pt, yshift=0pt] at (sl) {\contour{white}{$s = b$}};

\end{tikzpicture}
\caption{The vector set $\{\vec{d}_1,\vec{d}_2,\vec{d}_3\}$ forms a right-handed orthonormal basis. The directors $\vec{d}_1$ and $\vec{d}_2$ span the local material cross-section, whereas $\vec{d}_3$ is perpendicular to the cross-section. Note that in the presence of shear deformations  $\vec{d}_3$ is unequal to the tangent $\partial_s\vec{r}$ of the centerline of the rod.}
\label{Fig1}
\end{figure}

The components of the {\em strain variables} $\bk$ and $\bnu$ describe the deformation of the rod: the flexure with respect to the two major axes of the cross section $(\kappa_1,\kappa_2)$, torsion $(\kappa_3)$, shear $(\nu_1,\nu_2)$, and extension $(\nu_3)$.

The {\em kinematic part} of the governing Cosserat system consists of equations~\refeq{kinematic1}--\refeq{kinematic2} ((9a)--(9b) in~\cite{Antman:1995}, Ch.~8).
The remaining equations~\refeq{dynamic1}--\refeq{dynamic2} ((9c)--(9d) in~\cite{Antman:1995}, Ch.~8) make up the {\em dynamical part} of the governing equations. For a rod density $\rho(s)$ and cross-section $A(s)$, these equations follow from Newton's laws of motion:
\begin{eqnarray*}
\begin{array}{l}
\rho(s) A(s)\partial_t\bv=\partial_s\bn(s,t)+\bF(s,t)\,,\\[0.2cm]
\partial_t\bh(s,t)=\partial_s\bm(s,t)+\bnu(s,t)\times \bn(s,t)+\bl(s,t)\,,
\end{array}
\label{nl}
\end{eqnarray*}
where $\bm(s,t)=\sum_{k=1}^3m_k(s,t)\,\bd_k(s,t)$ are the {\em contact torques},  $\bn(s,t)=\sum_{k=1}^3n_k(s,t)\,\bd_k(s,t)$ are the {\em contact forces}, $\bh(s,t)=\sum_{k=1}^3h_k(s,t)\,\bd_k(s,t)$ are the {\em angular momenta}, and $\bF(s,t)$ and $\bl(s,t)$ are the  {\em external forces} and {\em torque densities}.

The contact torques $\bm(s,t)$ and contact forces $\bn(s,t)$ corresponding to the {\em internal stresses}, are related to the extension and shear strains $\bnu(s,t)$ as well as to the flexure and torsion strains $\bk(s,t)$ by the
{\em constitutive relations}
\begin{equation*} \label{cons_rel}
 \bm(s,t)=\hat{\bm}\left(\bk(s,t),\bnu(s,t),s\right)\,,\quad \bn(s,t)=\hat{\bn}\left(\bk(s,t),\bnu(s,t),s\right)\,.
\end{equation*}
Under certain reasonable assumptions (cf.~\cite{Antman:1995,CaoTucker:2008,MLGSW:2014}) on the structure of the right-hand sides in~\eqref{nl}, they take the form~\eqref{dynamic1}--\eqref{dynamic2} in which $\bJ$ is the inertia tensor of the cross-section per unit length. Unlike the kinematic part, the dynamical part contains parameters characterizing the rod under consideration: $\rho,A$ and $\bJ$ together with the external force $\bF$ and torque $\bl$, whereas the kinematic part is parameter free.

\noindent
In our previous papers~\cite{MLGSW:2014,MLGHRKW:2016}, by treating the kinematic Cosserat equations~\refeq{kinematic1}--\refeq{kinematic2} with computer algebra aided methods of the modern Lie symmetry analysis~(cf.~\cite{Ibragimov:09,Olver:93}) and the theory of completion of partial differential systems to involution (cf.~\cite{Seiler:10}),  we constructed the following closed form of an analytical solution to the kinematic part and proved its generality:
\begin{subequations}
\begin{align}
& \vec{\omega} ={\vec{p}_t}+\frac{p-\sin(p)}{p^3}\,\left(\vec{p}\,\left(\vec{p}\cdot {\vec{p}_t}\right)-p^2\,{\vec{p}_t}\right)-\frac{1-\cos(p)}{p^2}\,\vec{p}\times {\vec{p}_t}\,,\label{omega}\\
& \vec{\kappa} = {\vec{p}_s}
  +\frac{p-\sin(p)}{p^3}\,\left(\vec{p}\,\left(\vec{p}\cdot{\vec{p}_s}\right)-p^2\,{\vec{p}_s}\right)-\frac{1-\cos(p)}{p^2}\,\vec{p}\times {\vec{p}_s}\,,\label{kappa}\\
& \bnu=\bq\times\bk-\bq_s\,,\label{nu}\\[0.25cm]
& \bv=\bq\times\bw-\bq_t\,, \nonumber
\end{align}
\end{subequations}
where $\bp(s,t)$ and $\bq(s,t)$ are arbitrary analytic vector functions, and
\[
p=\sqrt{p_1^2+p_2^2+p_3^2}\,.
\]
For the efficient numerical solving of the dynamical Cosserat equations~\eqref{dynamic1}--\eqref{dynamic2}, we use the following fact: the vector equation~\eqref{omega} uniquely defines $\vec{p}_t$ in terms of $\vec{p}$ and $\vec{\omega}$. We formulate this fact as the following statement.
\begin{proposition}\label{pro:1}
The temporal derivative $\vec{p}_t$ of the vector function $\vec{p}$, as a solution of~\eqref{omega}, reads
\begin{equation}\label{pt-solution}
\vec p_t=\dfrac{\vec p\cdot\vec{\omega}}{p^2}\;\vec p+\frac 1 2\;\vec p\times\vec{\omega}
-\frac p 2 \cot\left(\frac{p}{2}\right)\cdot \frac{\vec p\times(\vec p\times\vec{\omega})}{p^2}\,.
\end{equation}
\end{proposition}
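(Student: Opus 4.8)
The plan is to read \eqref{omega} as a linear system for the unknown $\vec p_t$ and to invert it in closed form. Introduce the skew-symmetric linear operator $K$ defined by $K\vec x:=\vec p\times\vec x$. Since $K^2\vec x=\vec p\,(\vec p\cdot\vec x)-p^2\vec x$ and $K\vec x=\vec p\times\vec x$, the two bracketed structures in \eqref{omega} are exactly $K^2\vec p_t$ and $K\vec p_t$, so \eqref{omega} becomes $\vec\omega=L\,\vec p_t$ with
\begin{equation*}
L=I-\frac{1-\cos p}{p^2}\,K+\frac{p-\sin p}{p^3}\,K^2 ,
\end{equation*}
$I$ denoting the identity. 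The task is then to show that the right-hand side of \eqref{pt-solution} equals $L^{-1}\vec\omega$.

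First I would record the algebraic backbone. From $K\vec p=0$ and the formula for $K^2$ one obtains the minimal relation $K^3=-p^2K$, whence $K^4=-p^2K^2$ and every power $K^n$ with $n\ge 1$ collapses into a combination of $K$ and $K^2$. Moreover, for $p\neq 0$ the operators $I,K,K^2$ are linearly independent: applying a putative relation $a\,I+b\,K+c\,K^2=0$ to $\vec p$ kills the last two terms and forces $a=0$, after which splitting $b\,K+c\,K^2$ into its antisymmetric part $b\,K$ and symmetric part $c\,K^2$ forces $b=c=0$. Consequently an inverse lying in the algebra generated by $K$ must be uniquely of the form $M=a\,I+b\,K+c\,K^2$, and, since $L$ and $M$ are both polynomials in $K$ and hence commute, establishing $L^{-1}=M$ reduces to matching the three scalar coefficients of $I$, $K$, $K^2$ in $LM=I$.

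Next I would recast the claimed solution \eqref{pt-solution} in this operator form. Using the identity $\tfrac{\vec p\cdot\vec\omega}{p^2}\vec p=(I+p^{-2}K^2)\vec\omega$, the three terms of \eqref{pt-solution} collapse to $M\vec\omega$ with $a=1$, $b=\tfrac12$, and $c=\tfrac1{p^2}-\tfrac1{2p}\cot\!\big(\tfrac p2\big)$. It then remains to expand $LM$, eliminate the cubic and quartic powers via $K^3=-p^2K$ and $K^4=-p^2K^2$, and collect coefficients. The coefficient of $I$ comes out to $1$ with no condition, so the entire proposition reduces to checking that the coefficients of $K$ and of $K^2$ vanish.

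I expect these two scalar conditions to be the only nonroutine step, each resting on a half-angle relation. Substituting $a,b,c$, the $K$-coefficient vanishes precisely because $(1-\cos p)\cot(p/2)=\sin p$, and the $K^2$-coefficient vanishes precisely because $\cot(p/2)\sin p=1+\cos p$; both identities follow at once from $\cot(p/2)=\cos(p/2)/\sin(p/2)$ together with $\sin p=2\sin(p/2)\cos(p/2)$ and $1\mp\cos p=2\sin^2(p/2),\,2\cos^2(p/2)$. Once these are verified, $LM=I$ holds, which in finite dimension also yields $ML=I$; thus $L$ is invertible, $L^{-1}=M$, and \eqref{pt-solution} is exactly $\vec p_t=L^{-1}\vec\omega$. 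Finally I would remark that the scalar $\tfrac p2\cot(p/2)$ in \eqref{pt-solution}, though formally singular at $p=0$, extends analytically there with limit $1$, so the coefficient $c$ stays finite and the identity persists at $p=0$ by continuity.
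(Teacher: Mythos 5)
Your proof is correct. It follows the same overall strategy as the paper---read \eqref{omega} as a linear system for $\vec p_t$ and confirm \eqref{pt-solution} by substitution---but where the paper justifies solvability by quoting the determinant \eqref{Jacobian} of the $3\times3$ coefficient matrix and then delegates the actual verification to Maple's \emph{VectorCalculus} (or an unspecified hand computation), you organize the computation through the algebra generated by the operator $K\vec x=\vec p\times\vec x$. The relations $K\vec p=0$, $K^2\vec x=\vec p(\vec p\cdot\vec x)-p^2\vec x$ and $K^3=-p^2K$ correctly turn \eqref{omega} into $\vec\omega=L\vec p_t$ and \eqref{pt-solution} into $M\vec\omega$ with your stated coefficients $a=1$, $b=\tfrac12$, $c=p^{-2}-\tfrac{1}{2p}\cot(p/2)$, and the product $LM$ then collapses to $I$ plus multiples of $K$ and $K^2$ whose vanishing reduces exactly to the two half-angle identities you name; I checked both coefficient computations and they do cancel. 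This buys a self-contained, machine-free verification, and $LM=I$ in finite dimension simultaneously yields invertibility of $L$ and hence uniqueness of the solution, so the determinant \eqref{Jacobian} is not needed at all (the linear-independence digression for $I,K,K^2$ is likewise dispensable). The one caveat, shared with the paper, is that $\cot(p/2)$ and the determinant both degenerate at $p\in 2\pi\mathbb{Z}\setminus\{0\}$, so the formula lives on the interval $p\in(0,2\pi)$ that the paper restricts to anyway; your closing remark that $\tfrac p2\cot(p/2)\to1$ as $p\to0$ takes care of the other endpoint.
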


\begin{proof}
The vector function $\vec{p}_t$ occurs linearly in~\eqref{omega}. In the component form it is a linear system of three equations in three unknowns $(p_1)_t,(p_2)_t,(p_3)_t$ whose matrix has a non-singular determinant (cf. formula~(11) in~\cite{MLGHRKW:2016})
\begin{equation}
2\,\frac{\cos(p)-1}{p^2} \label{Jacobian}\,.
\end{equation}

The vector form of $\vec{p}_t$ as a solution to equality~\eqref{omega} is given by~\eqref{pt-solution}. This can be verified either by hand computation or by using the routines of the Maple package {\em{VectorCalculus}} after the substitution of~\eqref{pt-solution} into the right-hand side of~\eqref{omega} and simplification of the obtained expression to $\vec{\omega}$.\\\phantom{}\hfill $\Box$
\end{proof}
Instead of system~\eqref{kinematic1}--\eqref{dynamic2} for unknowns $(\vec{\omega}, \vec{\kappa}, \bnu, \bv)$, we are going to solve the equivalent system
\begin{subequations}
 \begin{align}
    & \vec p_t=\dfrac{\vec p\cdot\vec{\omega}}{p^2}\;\vec p+\frac 1 2\;\vec p\times\vec{\omega} -\frac p 2 \cot\left(\frac{p}{2}\right)\cdot \frac{\vec p\times(\vec p\times\vec{\omega})}{p^2}\,, \label{eqp:1}\\[0.1cm]
    & \bq_t=\bq\times\bw-\bv\,,\label{eqp:2}\\[0.15cm]
    & \rho\bJ\cdot \bw_t=\hat{\bm}_s+\bk\times\hat{\bm}+\bnu\times
        \hat{\bn}-\bw\times(\rho\bJ\cdot\bw)+\bl\,,\label{eqp:3}\\[0.15cm]
    & \rho A\bv_t=\hat{\bn}_s+\bk\times \hat{\bn}-\bw\times (\rho A\bv)+\bF \label{eqp:4}
 \end{align}
\end{subequations}
for unknown vector functions $(\vec{p}, \vec{q}, \vec{\omega}, \bv)$, where $\bk$ and $\bnu$ are given by \eqref{kappa}--\eqref{nu}.

\section{Symbolic-Numeric Integration Method}
\label{sec:3}

\subsection{Naive Approach: Explicit Numerical Solving}

Suppose we know the values of the vector-functions $\bw$ and $\vec{p}$ on a time layer $t$. Then $\vec{p}_t$ in~\eqref{eqp:1} can be approximated by the forward Euler difference
\[
\vec{p}_t \rightarrow \frac{\vec{p}(s,t+\triangle t) - \vec{p}(s,t)}{\triangle t}\,.
\]
However, since equation~\eqref{eqp:1} has a singularity at $p = 2 \pi$ related with vanishing~\eqref{Jacobian}, there is a restriction to the time step $\Delta t$ caused by the condition
\begin{equation}\label{p-interval}
p(s,t+ \triangle t) \in (0, 2 \pi)
\end{equation}
to be held for all values of $s$. This is a severe problem for the numerical solving of the governing Cosserat system, because in the course of solving, one must control the time step at every value of $s$ to keep the values of $p(s,t)$ within the interval indicated in~\eqref{p-interval}. This problem is resolved in the symbolic-numeric integration method described in the next subsection.

\subsection{Advanced Approach based on Exponential Integration}

To avoid the problem of controlling the condition~\eqref{p-interval} we use the differential  equation~\eqref{eqp:1} for $\vec{p}(t)$ and rewrite it in terms of $p$ and the unit vector $\vec{e}$
where $\vec{p} = p \vec{e}$\,. It leads to the following differential system:
\begin{subequations}
 \begin{align}
    & p_t=\vec{e}\cdot \vec{\omega}\,,\nonumber\\[0.1cm]
    & 2\,\vec{e}_t=\vec{e}\times \vec{\omega}-\cot\left(\frac{p}{2}\right)\vec{e}\times(\vec{e}\times \vec{\omega})\,.\nonumber
 \end{align}
\end{subequations}
Now assume that the vector $\vec{\omega}$ is independent of $t$ on the time interval $\Delta t$ and choose the Cartesian coordinate system $\vec{e}_1, \vec{e}_2, \vec{e}_3$ such that $\vec{e}_3 || \vec{\omega}$:
\[
\vec{e} = A_1\, \vec{e}_1 + A_2\, \vec{e}_2 + A_3\, \vec{e}_3\,,\quad \vec{\omega} = \omega\, \vec{e}_3\,,\quad \omega:=\sqrt{\omega_1^2+\omega_2^2+\omega_3^2}\,.
\]
Then, we obtain the following system of four first-order differential equations:
\begin{subequations}
 \begin{align}
& 2\, (A_1)_{t} = A_2\, \omega - \cot\left(\frac{p}{2}\right) A_1 A_3\, \omega\,, \label{eq:1}\\[0.15cm]
& 2\, (A_2)_{t} = -A_1\, \omega - \cot\left({\frac{p}{2}}\right) A_2 A_3\, \omega\,,\label{eq:2}\\[0.15cm]
& 2\, (A_3)_{t} = -\cot\left(\frac{p}{2}\right) (A_3^2 - 1)\, \omega\,,\label{eq:3}\\[0.15cm]
& p_t = A_3\, \omega\,. \label{eq:4}
\end{align}
\end{subequations}
From the equations \eqref{eq:3}--\eqref{eq:4} it follows
\[
  \frac{2\,A_3(A_3)_t}{A_3^2-1}=-\cot\left(\frac{p}{2}\right)p_t\,,
\]
and, hence,
\begin{equation}\label{eq:p}
   (1-A_3^2)\,\sin^2\left(\frac{p}{2}\right)=C\,,\quad C_t=0\,.
\end{equation}
Equation~\eqref{eq:p} immediately implies the following statement providing fulfillment of~\eqref{p-interval}.

\begin{proposition}
\label{pro:2}
If $C \ne 0$, then $p(t)\in (0,2\pi)$ for all $t\geq t_0$ if $p(t_0)\in (0,2\pi)$.
\end{proposition}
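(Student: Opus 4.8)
The plan is to combine the conserved quantity in \eqref{eq:p} with the unit-norm constraint on $\vec{e}$ and a continuity argument. First I would record that, since $\vec{e}=A_1\vec{e}_1+A_2\vec{e}_2+A_3\vec{e}_3$ is a unit vector, one has $A_1^2+A_2^2+A_3^2=1$, and hence $A_3^2\le 1$, i.e.\ $0\le 1-A_3^2\le 1$. If one prefers to argue purely from the system \eqref{eq:1}--\eqref{eq:3}, a short computation gives $\frac{\diff}{\diff t}\bigl(A_1^2+A_2^2+A_3^2-1\bigr)=-\cot\!\left(\tfrac{p}{2}\right)\omega\,A_3\bigl(A_1^2+A_2^2+A_3^2-1\bigr)$, so the deviation of $|\vec{e}|^2$ from $1$ obeys a linear homogeneous ODE and the unit-norm condition is preserved by the flow.

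Next I would use this to bound $\sin^2(p/2)$ from below. Because $C=(1-A_3^2)\sin^2(p/2)$ is independent of $t$ by \eqref{eq:p}, and because both factors $1-A_3^2\ge 0$ and $\sin^2(p/2)\ge 0$ are nonnegative, the hypothesis $C\ne 0$ forces $C>0$. In particular $1-A_3^2>0$, so dividing yields $\sin^2(p/2)=C/(1-A_3^2)\ge C>0$ for every $t$ at which the solution exists, where the last inequality uses $1-A_3^2\le 1$. Thus $\abs{\sin(p(t)/2)}\ge\sqrt{C}>0$ uniformly in $t$; this uniform bound is exactly what keeps $p$ away from the endpoints where the Jacobian \eqref{Jacobian} vanishes and the $\cot(p/2)$ term blows up.

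The conclusion then follows from a sign/continuity argument. The map $t\mapsto\sin(p(t)/2)$ is continuous (as $p$ is a $C^1$ solution) and, by the previous step, never vanishes, so by the intermediate value theorem it retains its sign. At $t_0$ the hypothesis $p(t_0)\in(0,2\pi)$ gives $p(t_0)/2\in(0,\pi)$, whence $\sin(p(t_0)/2)>0$; therefore $\sin(p(t)/2)\ge\sqrt{C}>0$ for all $t\ge t_0$. Finally, the set $\{p:\sin(p/2)>0\}$ is the disjoint union $\bigcup_{m}\bigl(4m\pi,\,4m\pi+2\pi\bigr)$, whose components are separated by the zeros $p=2k\pi$ of $\sin(p/2)$. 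Since $p(t)$ is continuous and can never attain such a zero, it cannot jump between components and so remains in the component $(0,2\pi)$ that contains $p(t_0)$, giving $p(t)\in(0,2\pi)$ for all $t\ge t_0$.

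I expect the only genuinely delicate point to be this last separation-of-components step: establishing $\sin(p/2)>0$ is by itself insufficient, since that inequality also holds on $(4\pi,6\pi)$ and beyond, and one must explicitly invoke the continuity of $p$ together with the impossibility of crossing the zeros $p=2\pi,4\pi,\dots$ to confine $p$ to the single interval $(0,2\pi)$. The supporting uniform lower bound $\sin^2(p/2)\ge C$, which rests crucially on $1-A_3^2\le 1$ from the unit constraint, is the technical ingredient that makes this confinement rigorous.
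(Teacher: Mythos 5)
Your proof is correct and follows essentially the same route as the paper, which gives no explicit argument and simply asserts that the conservation law \eqref{eq:p} ``immediately implies'' the claim. Your write-up supplies the details left implicit there --- the preservation of $A_1^2+A_2^2+A_3^2=1$, the resulting uniform bound $\sin^2(p/2)\ge C>0$, and the continuity argument confining $p$ to the component $(0,2\pi)$ --- all of which are sound.
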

If one substitutes $A_3=p_t/\omega$ from~\eqref{eq:3} and replaces $p$ with $q:=\cos\left(\frac{p}{2}\right)$, then \eqref{eq:p} takes the form
\[
 4\,q_t^2=\omega^2\left(1-q^2 - C\right)\,.
\]

This equation is easily solvable by the Maple routine {\em dsolve} which outputs four solutions. These solutions can be unified into the general solution
\begin{equation*}\label{q-solution}
  q= \sqrt{1 - C} \sin\left(\frac{1}{2} \omega (C_1 - t) \right)\,,\quad (C_1)_t=0\,.
\end{equation*}
Then, the whole system~\eqref{eq:1}--\eqref{eq:4} admits the following general analytical solution
\begin{subequations}
 \begin{align}
& A_1(s,t) = - \frac{\sqrt{C} \cdot sin(\frac{1}{2}\omega(C_2 - t))}{\sqrt{\omega^2 \cos^2(\frac{1}{2}\omega (C_1 - t)) + C \sin^2(\frac{1}{2}\omega (C_1 - t))}}\,,\label{ExpSol:1}\\[0.15cm]
& A_2(s,t) = \frac{\sqrt{C} \cdot cos(\frac{1}{2}\omega(C_2 - t))}{\sqrt{\omega^2 \cos^2(\frac{1}{2}\omega (C_1 - t)) + C \sin^2(\frac{1}{2}\omega (C_1 - t))}}\,,\label{ExpSol:2}\\[0.15cm]
& A_3(s,t) = \frac{\sqrt{\omega^2 - C} \cdot cos(\frac{1}{2}\omega(C_1 - t))}{\sqrt{\omega^2 \cos^2(\frac{1}{2}\omega (C_1 - t)) + C \sin^2(\frac{1}{2}\omega (C_1 - t))}}\,,\label{ExpSol:3}\\[0.15cm]
& p(s,t) = 2\arccos \left(\frac{\sqrt{\omega^2 - C}\sin(\frac{1}{2} \omega (C_1 - t))}{\omega} \right)\,,\label{ExpSol:4}
\end{align}
\end{subequations}
where $C, C_1, C_2$ are functions of $s$. These functions are determined by the following initial data:
\begin{eqnarray}
&& C(s) := \omega^2 \left(1 - A_3^2(s,t_0)\right)\sin^2\left(\frac{p(s,t_0)}{2}\right)\,,\nonumber \\
&& C_1(s) := t_0 + \frac{A_3(s,t_0) |\sin(p(s,t_0))|}{\sqrt{\omega^2 - C(s)}}\,, \nonumber\\
&& C_2(s) := t_0 + \frac{2}{\omega} \arctan \left( \frac{A_1(s,t_0)}{A_2(s,t_0)} \right)\,.\nonumber
\end{eqnarray}

\begin{proposition}
\label{pro:3}
$C(s) \equiv 0$ if and only if $A_3(s,t) = \pm 1$ what corresponds to a degenerated solution\\[-0.3cm]
$$
A_1(s,t) = 0\,,\quad A_2(s,t) = 0\,,\quad p(s,t) = p(s,t_0) \pm \omega t\,.
$$
Computationally, this solution is not of interest, since it is unstable: a small (e.g. numerical) deviation of $A_3(s,t_0)$ from $\pm 1$ converts the solution into a generic one.
\end{proposition}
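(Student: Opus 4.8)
\emph{Proof proposal.} The plan is to exploit two structural facts already available: the unit-length constraint $A_1^2+A_2^2+A_3^2=1$ inherited from the representation $\vec p=p\,\vec e$ with $\vec e$ a unit vector, and the first integral~\eqref{eq:p}, together with the explicit initial-data expression $C(s)=\omega^2\bigl(1-A_3^2(s,t_0)\bigr)\sin^2\!\bigl(p(s,t_0)/2\bigr)$. These reduce the equivalence to elementary algebra, and the ``for all $t$'' part to an invariance argument for~\eqref{eq:3}.

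First I would settle the equivalence at $t=t_0$. Because admissible initial data satisfy $p(s,t_0)\in(0,2\pi)$ (cf.~Proposition~\ref{pro:2}), we have $\sin^2\!\bigl(p(s,t_0)/2\bigr)>0$, and for $\omega\neq0$ the displayed formula gives $C(s)=0\iff 1-A_3^2(s,t_0)=0\iff A_3(s,t_0)=\pm1$. To promote $A_3(s,t_0)=\pm1$ to $A_3(s,t)\equiv\pm1$, I would note that the right-hand side of~\eqref{eq:3} carries the factor $A_3^2-1$, which vanishes identically at $A_3=\pm1$; hence $A_3=\pm1$ is an equilibrium of~\eqref{eq:3} and, by uniqueness, $A_3(s,t)\equiv\pm1$ for all $t$.

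Next I would read off the degenerate solution. The unit constraint then forces $A_1^2+A_2^2=1-A_3^2=0$, so $A_1(s,t)=A_2(s,t)=0$, and substituting $A_3=\pm1$ into~\eqref{eq:4} gives $p_t=\pm\omega$, whence $p(s,t)=p(s,t_0)\pm\omega t$ (taking $t_0=0$), the stated profile. Consistency is immediate: with $A_1=A_2=0$ both sides of~\eqref{eq:1}--\eqref{eq:2} vanish, so no additional constraints appear.

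The remaining and slightly more delicate point is the instability remark, which I would argue by genericity rather than by a Lyapunov estimate. Viewed as a function of the initial datum $A_3(s,t_0)$ (with $p(s,t_0)\in(0,2\pi)$ and $\omega\neq0$ fixed), $C(s)$ is continuous and vanishes only at the two endpoints $A_3(s,t_0)=\pm1$ of the admissible range $[-1,1]$; any perturbation off these values yields $C(s)\neq0$ and thereby places the trajectory in the generic family~\eqref{ExpSol:1}--\eqref{ExpSol:4}, whose $p$ oscillates and remains confined to $(0,2\pi)$ rather than growing linearly. If an explicit linearization is wanted, writing $A_3=\pm1+\delta$ in~\eqref{eq:3} gives $\dot\delta=-\cot(p/2)\,\omega\,\delta$ to first order; the only subtlety I anticipate is that $\cot(p/2)$ changes sign on $(0,2\pi)$, so the conclusion is best stated as loss of robustness of the qualitative solution type (linear growth of $p$ versus bounded oscillation) under arbitrarily small perturbations, which is precisely the assertion of the proposition.
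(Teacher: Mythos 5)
Your proposal is correct, and it is in fact considerably more complete than the argument the paper gives. The paper's proof essentially \emph{asserts} the degenerate solution, observes that $|p(t)|$ may grow without bound along it, and then makes the same perturbation remark you make: any small deviation $A_3(s,t_0)=\pm(1-\epsilon(s))$ produces $C\neq 0$ and hence a generic, bounded-$p$ trajectory. What you add, and what the paper leaves implicit, are the three logical steps needed to justify the ``if and only if'' and the displayed form of the degenerate solution: (a) the equivalence $C=0\iff A_3(s,t_0)=\pm1$ via the initial-data formula together with $\sin^2(p(s,t_0)/2)>0$ on the admissible range $p(s,t_0)\in(0,2\pi)$; (b) the promotion to all $t$ using the factor $A_3^2-1$ in~\eqref{eq:3} (equilibrium plus uniqueness) or, equivalently, the first integral~\eqref{eq:p}; and (c) the unit-sphere constraint $A_1^2+A_2^2+A_3^2=1$ forcing $A_1=A_2=0$, after which \eqref{eq:4} yields $p_t=\pm\omega$. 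Your closing caveat about the linearization --- that $\cot(p/2)$ changes sign on $(0,2\pi)$, so the instability is best read as non-robustness of the qualitative solution type rather than exponential divergence --- is an honest refinement that the paper does not attempt; the paper's own instability claim is exactly the genericity argument you give first (perturbation lands in the family \eqref{ExpSol:1}--\eqref{ExpSol:4} where $p$ stays bounded by Proposition~\ref{pro:2}). One small bookkeeping point: the invariance of the unit sphere under \eqref{eq:1}--\eqref{eq:3} is used but not checked in either proof; it follows from $\frac{\diff}{\diff t}(A_1^2+A_2^2+A_3^2-1)=-\omega\cot(p/2)\,A_3\,(A_1^2+A_2^2+A_3^2-1)$, and is worth a line if you want the argument fully self-contained.
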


\begin{proof} The solution\\[-0.3cm]
$$
p(s,t) = p(s,t_0) \pm \omega t, A_3(s,t) = \pm 1, A_2(s,t) = A_1(s,t) = 0
$$
is singular. Unlike the generic solution, the value of $|p(t)|$ in this solution may increase indefinitely.
However, it is unstable, since any small perturbation $\epsilon > 0$ to the initial value $A_3(s,t_0) = \pm \left(1 - \epsilon(s)\right)$ leads to the generic case when $p(t)$ remains bounded.\\\phantom{}\hfill $\Box$
\end{proof}

Our symbolic-numeric approach to the derivation of equations \eqref{eq:1}--\eqref{eq:4} and the construction of their explicit analytical solution~\eqref{ExpSol:1}--\eqref{ExpSol:4} is in accord with the general principles of exponential integration (see e.g.~\cite{Hochbruck:2010}). The basic idea behind exponential integration is the identification of a prototypical differential system which has the stiffness properties similar to those in the original equation and which admits explicit solving.

In our case, the stiffness properties of the differential system~\eqref{eq:1}--\eqref{eq:4} are similar to those in system~\eqref{eqp:1}--\eqref{eqp:4}. In doing so, the last system belongs to the second class of stiff problems (cf.~\cite{Hochbruck:2010}, p.~210) whose stiffness is caused by the highly oscillatory behavior of their solutions. For such problems both explicit and implicit Euler schemes fail to provide the required stability unless the step size is strongly reduced to provide the resolution of all the oscillations in the solution. Thereby, the standard numerical treatment of the equations \eqref{kinematic1}--\eqref{dynamic2} and hence equations~\eqref{eqp:1}--\eqref{eqp:4} is computationally inefficient. Just by this reason, special numerical solvers have been designed (cf.~\cite{LangLinnArnold:2011,SobottkaLayWeber:2008}) for the equations \eqref{kinematic1}--\eqref{dynamic2}.

\reffig{fig:2} illustrates the stiffness of the differential system~\eqref{eqp:1}--\eqref{eqp:4}. The behavior is shown, at $\omega=1$, of the functions $p(s_0, t)$ and $A_3(s_0, t)$ as solutions of \eqref{eq:3}--\eqref{eq:4} for the initial conditions $p(s_0,0)=1$ and $A_3(s_0,0)=0.99$. As illustrated, the solution oscillates and changes drastically over time. A numerical reconstruction of such a behavior is possible only for very small step sizes of difference approximations.

\begin{figure}[!tbp]
  \centering
  \begin{minipage}[b]{0.475\textwidth}
    \includegraphics[width=\textwidth]{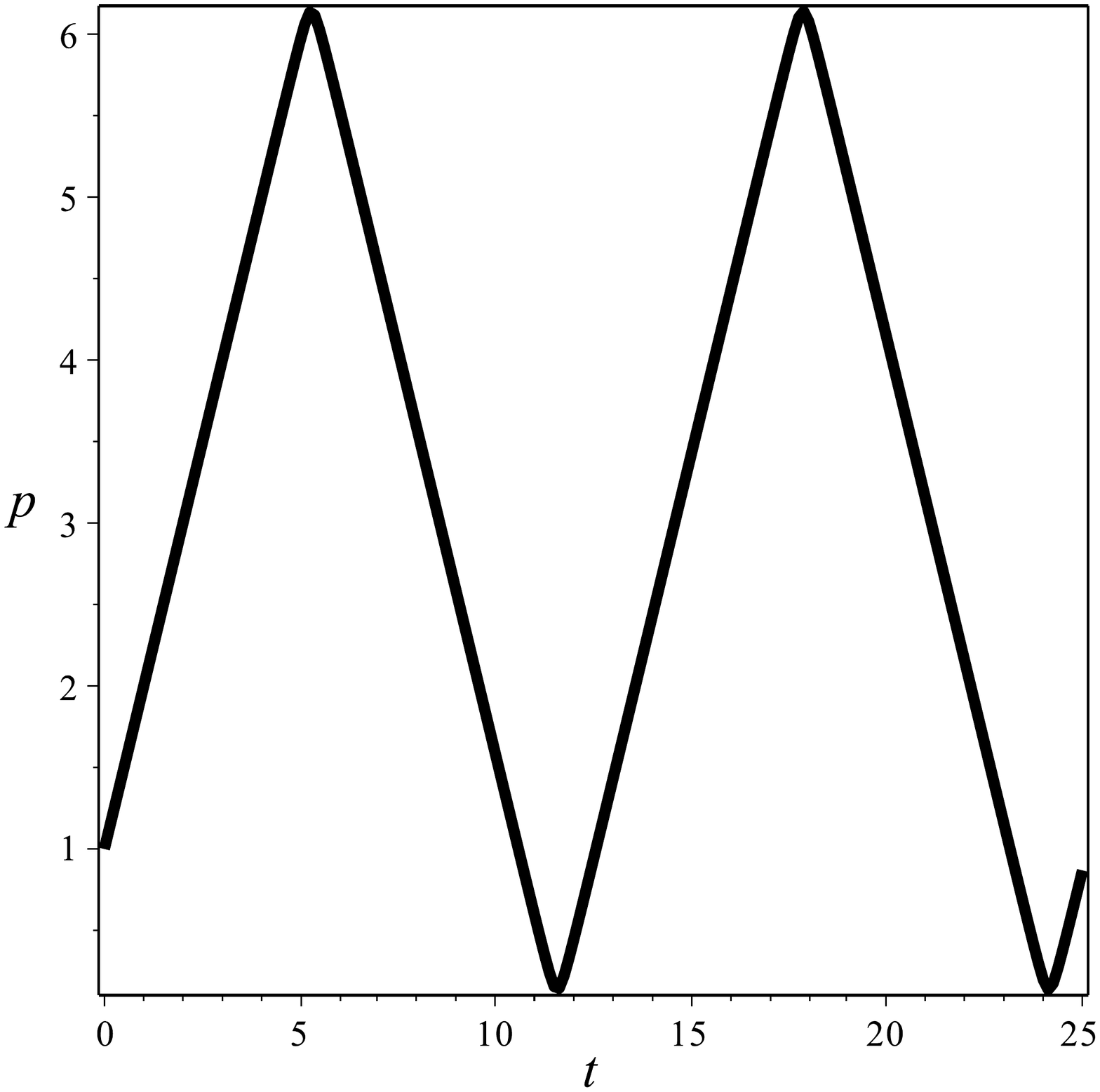}
  \end{minipage}
  \hfill
  \begin{minipage}[b]{0.475\textwidth}
    \includegraphics[width=\textwidth]{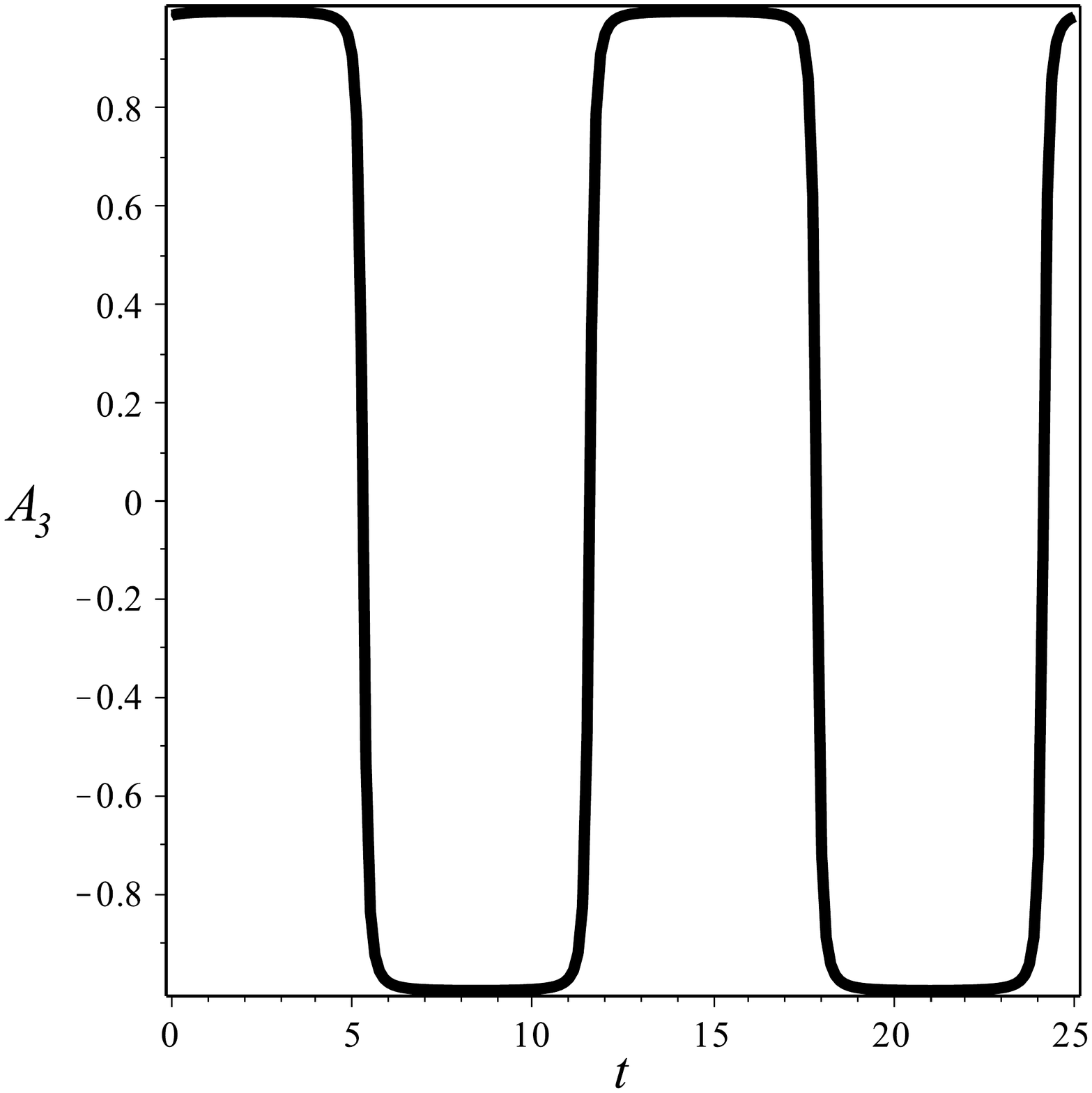}
  \end{minipage}
  \vspace{-1cm}
  \caption{Illustration of the temporal evolution of $p(s_0,t)$ (left) and  $A_3(s_0,t)$ (right).}
      \label{fig:2}
\end{figure}

\section{Numerical Comparison with the Generalized $\alpha$-Method}
\label{sec:4}

In 1993, Chung and Hulbert (cf.~\cite{Chung:1993}) presented the generalized $\alpha$-method as a new integration algorithm for problems from structural mechanics. It is characterized primarily by a controllable numerical dissipation of high-frequency components in the numerical solution. These occur, for example in the context of finite element-based simulations, when the high-frequency states are too roughly resolved. Such methods usually improve the convergence behavior of iterative solving strategies for nonlinear problems.

The generalized $\alpha$-method is well-established in the field of structural mechanics and has the major advantage of unconditional stability as well as user controllable numerical damping. The idea of the introduction of a controllable numerical damping in the integration process is not new and found, among other things, realization in $\alpha$-HHT (cf.~\cite{Hilbert:1977}) or in the WBC-$\alpha$-method (cf.~\cite{Wood:1981}). The aim of the development of such methods is to maximize the attenuation of high-frequency components while preserving the important low-frequency components. Using the generalized $\alpha$-method, this ratio is optimal, i.e.~for a given attenuation of the high-frequency components, the attenuation of the low-frequency components is minimized. A brief specification of this method for general problems in structural mechanics in given in Appendix \refapx{sec:appendix}.

Following \cite{SobottkaLayWeber:2008}, using a state vector\\[-0.3cm]
$$
\vec{x}(s,t) = \left(\vec{v}(s,t),\vec{\omega}(s,t),\vec{\kappa}(s,t),\vec{n}(s,t)\right)^{\mathsf{T}}
$$
describing the rod, we can rewrite its equations of motion \refeq{kinematic1}--\refeq{dynamic2} in terms of a system
\begin{equation}
\label{cos:alpha}
\hat{\mat{M}}\partial_t\vec{x}(s,t)+\hat{\mat{K}}\partial_s\vec{x}(s,t)+\vec{\Lambda}(s,t)=\vec{0}\,.
\end{equation}
Here\\[-0.5cm] $$
\hat{\mat{M}}=\mathsf{diag}(\rho A,\rho A,\rho A,\rho I_1,\rho I_2, \rho I_3,1,1,1,0,0,0)
$$\\[-0.3cm]
is the mass matrix and $\hat{\mat{K}}=-\mathsf{adiag}(\mat{1},\mat{1},\mat{K},\mat{1})$ is the stiffness matrix with $\mat{K}=\mathsf{diag}(EI_1,EI_2,G\mu)$, in which the bending stiffness into the direction of the principal components of the cross section $A$ is denoted by $EI_{1,2}$, and the torsional stiffness by $G\mu$. As above, the rod's density is given by $\rho$, the Young's modulus by $E$ and the shear modulus by $G$.
The nonlinear terms are included in the nonlinearity $\vec{\Lambda}$. We do not explicitly write out the resulting equations here for brevity and refer to \cite{SobottkaLayWeber:2008} for the explicit form of \eqref{cos:alpha}. In the generalized $\alpha$-method, the update schemes of positions and velocities at point $i$ in time correspond to those of the classical Newmark integrator. Its accurate and efficient application in the context of the simulation of elastic rods was demonstrated in \cite{SobottkaLayWeber:2008}.

According to \cite{SobottkaLayWeber:2008}, we consider four different test cases: (i) a sinus-like shaped rod which is released under gravity from a horizontal position (no damping); (ii) a highly damped helical rod subject to a time-varying end point load (the damping is obtained by setting the integration parameters (see Appendix \refapx{sec:appendix}) of the generalized $\alpha$-method to $\alpha:=\alpha_m=\alpha_f=0.4$, $\beta=0$, and $\gamma=1.0$); (iii) a straight rod (45 cm) subject to a time-varying torque; (iv) a helical rod with low damping that is excited by a force parallel to the axis of the helix and released after $0.1\,\text{s}$ showing the typical oscillating behavior of a steel-like coil spring.

We simulate these scenarios using the generalized $\alpha$-method and the symbolic-numeric integration scheme presented in this contribution. All fibers are discretized using $100$ individual segments. Using the symbolic-numeric method, damping is incorporated using the linear Rayleigh damping as described in \cite{MMS:2015}. We enforce a maximally tolerated relative $L^2$-error of 1\% in the position and velocity space in order to ensure sufficient accuracy and measure the required computation time on a machine with an Intel(R) Xeon E5 with 3.5 GHz and 32 GB DDR-RAM without parallelization. For all test cases, we obtain significant speedups of the presented symbolic-numeric method (``snm'') compared to the generalized $\alpha$-method (``$\alpha$''), in particular:
\begin{enumerate}[(i)]
\item speedup of a factor more than 20 ($\alpha$: $4.1\,\text{s}$; snm: $0.2\,\text{s}$);
\item speedup of over $21\times$ ($\alpha$: $4.3\,\text{s}$; snm: $0.2\,\text{s}$);
\item speedup of approx.~$19\times$ ($\alpha$: $3.8\,\text{s}$; snm: $0.2\,\text{s}$);
\item speedup of approx.~$34\times$ ($\alpha$: $6.8\,\text{s}$; snm: $0.2\,\text{s}$).
\end{enumerate}
Please note that the computation time of $0.2\,\text{s}$ of the presented symbolic-numeric method is constant for all test cases (with identical duration of $8\,\text{s}$).

\section{Conclusison}
\label{sec:5}

Based on the closed form solution to the kinematic part~\eqref{kinematic1}--\eqref{kinematic2} of the governing Cosserat system~\eqref{kinematic1}--\eqref{dynamic2} and with assistance of Maple, we developed a new symbolic-numeric method for their integration. Our computational experiments demonstrate the superiority of the new method over the generalized $\alpha$-method for the accurate and efficient integration of Cosserat rods. Its application prevents from numerical instabilities and allows for highly accurate and efficient simulations. This clearly shows the usefulness of the constructed analytical solution to the kinematic equations and should enable more complex and realistic Cosserat rod-based scenarios to be explored in scientific computing without compromising efficiency.

\section*{Acknowledgements}
The authors appreciate the insightful comments of the anonymous referees. This work has been partially supported by the King Abdullah University of Science and Technology (KAUST baseline funding), the Max Planck Center for Visual Computing and Communication (MPC-VCC) funded by Stanford University and the Federal Ministry of Education and Research of the Federal Republic of Germany (BMBF grants FKZ-01IMC01 and FKZ-01IM10001), the Russian Foundation for Basic Research (grant 16-01-00080) and the Ministry of Education and Science of the Russian Federation (agreement 02.a03.21.0008).



\appendix
\section{Generalized $\alpha$-Method}
\label{sec:appendix}

In this appendix, we briefly explain the application of the generalized $\alpha$-method for the common case of a system described by the standard equation from structural mechanics,
\begin{equation} \label{structural_mechanics}
\vec{M}\ddot{\vec{x}}+\vec{D}\dot{\vec{x}}+\vec{K}\vec{x}+\vec{\Lambda}(t)=\vec{0},
\end{equation}
in which $\vec{M}$,$\vec{D}$, and $\vec{K}$ denote the mass, damping, and stiffness matrices. The time-dependent displacement vector is given by $\vec{x}(t)$ and its first- and second-order temporal derivatives describe velocity and acceleration. The vector $\vec{\Lambda}(t)$ describes external forces acting on the system at time $t$. We are searching for functions $\vec{x}(t)$, $\vec{\upsilon}(t)=\dot{\vec{x}}(t)$, and $\vec{a}(t)=\ddot{\vec{x}}(t)$ satisfying \eqref{structural_mechanics} for all $t$ with initial conditions $\vec{x}(t_0)=\vec{x}_0$ and $\vec{\upsilon}(t_0)=\vec{\upsilon}_0$.

For the employment of the generalized $\alpha$-method, we can write the integration scheme with respect to \eqref{structural_mechanics} as follows:
$$
\vec{M}\vec{a}_{1-\alpha_m}+\vec{D}\vec{x}_{1-\alpha_f}+\vec{K}\vec{x}_{1-\alpha_f}+\vec{\Lambda}(t_{1-\alpha_f})=\vec{0},
$$
with the substitution rule $(\cdot)_{1-\alpha}:=(1-\alpha)(\cdot)_i+\alpha(\cdot)_{i-1}$ and the approximations
\begin{eqnarray*}
\vec{x}_i=\vec{x}_{i-1}+\Delta t\vec{v}_{i-1}+\Delta t^2\left(\left(\frac{1}{2}-\alpha\right)\vec{a}_{i-1}+\beta\vec{a}_i\right)\,,\\
\vec{\upsilon}_i=\vec{\upsilon}_{i-1}+\Delta t\left(\left(1-\gamma\right)\vec{a}_{i-1}+\gamma\vec{a}_i\right)\,.
\end{eqnarray*}
The parameters $\alpha_m$, $\alpha_f$, $\gamma$, and $\beta$ are integration coefficients.

\end{document}